\documentclass[11pt]{article}

\usepackage{amssymb,amsmath,epsfig,amsthm}
\usepackage{mathtools}
\usepackage{graphicx}
\usepackage{color}
\usepackage{fullpage}
\usepackage{tikz}
\usetikzlibrary{positioning,decorations.pathreplacing,matrix,arrows,calc,shapes,arrows}

\numberwithin{equation}{section}
\newtheorem{theorem}{Theorem}[section]
\newtheorem{conjecture}{Conjecture}
\newtheorem{lemma}{Lemma}[section]
\newtheorem{corollary}{Corollary}[section]

\theoremstyle{definition}

\newtheorem*{definition}{Definition}
\newtheorem*{definitions}{Definitions}
\newtheorem*{conventions}{Conventions}
\newtheorem*{ack}{Acknowledgements}

\DeclareMathOperator{\des}{Des}
\DeclareMathOperator{\maj}{maj}

\DeclareMathOperator{\stp}{Step}
\DeclareMathOperator{\std}{std}

\DeclareMathOperator{\Tp}{\Theta(\pi^\prime)}
\DeclareMathOperator{\T}{\Theta(\pi)}

\newcommand{\makeset}[2]{ \{#1\;|\;#2\} }
\newcommand{\RL}[1]{ RL\left(#1\right) }
\newcommand{\LR}[1]{ LR\left(#1\right) }

\date{}

\begin{document}
\title{A refinement of Wilf-equivalence for patterns of length 4}
\author{Jonathan Bloom\\Dartmouth College\\Hanover, NH 03755\\(603) 646-2415\\{\tt jonathan.bloom@dartmouth.edu}}

\maketitle

\begin{abstract}
In their paper~\cite{DokosDwyer:Permutat12}, Dokos et al. conjecture that the major index statistic is equidistributed among 1423-avoiding, 2413-avoiding, and 3214-avoiding permutations. In this paper we confirm this conjecture by constructing two major index preserving bijections, $\Theta:S_n(1423)\to S_n(2413)$ and $\Omega:S_n(2314)\to S_n(2413)$.  In fact, we show that $\Theta$ (respectively, $\Omega$) preserves numerous other statistics including the descent set, right-to-left maxima (respectively, left-to-right minima), and a statistic we call steps.  Additionally, $\Theta$ (respectively, $\Omega$) fixes all permutations avoiding both $1423$ and $2413$ (respectively, $2314$ and $2413$).  
\end{abstract}

\section{Introduction}

We write a permutation $\pi\in S_n$ as $\pi_1\ldots \pi_n$ where $ |\pi|=n$ is called the length of $\pi$.  For any sequence of distinct integers $a_1\ldots a_n$ there is a unique permutation $\pi\in S_n$ with the defining property that  $a_i<a_j$ if and only if $\pi_i<\pi_j$, provided $i\neq j$.  We say that $a_1\ldots a_n$ is \emph{order isomorphic} to $\pi$ and write $\pi=\std(a_1\ldots a_n)$.  For example, $1342 = \std(2693)$.  Moreover, we say that $\pi$ \emph{avoids}  $\tau\in S_m$ if no subsequence of $\pi$ is order isomorphic to $\tau$.  We denote by $S_n(\tau)$ the set of all $\pi\in S_n$ that avoid $\tau$.  In this context we usually refer to $\tau$ as a \emph{pattern}.  For example, $5734612$ avoids $1423$ but does not avoid $2413$.     

If two patterns $\sigma, \tau\in S_m$ are such that $|S_n(\tau)| = |S_n(\sigma)|$ for all $n$, then we say that $\sigma$ is \emph{Wilf-equivalent} to $\tau$, and write $\sigma\sim\tau$.   For example, it is well known that $S_3$ has only one Wilf-equivalence class, while $S_4$ partitions into 3 classes~\cite{Bona:Combinat12}.

A refinement of Wilf-equivalence involves the idea of a \emph{permutation statistic}, which is defined to be a function $f:S_n\to T$, where $T$ is any fixed set.  We say $\sigma$ and $\tau$ are $f$-Wilf-equivalent if, for all $n$, there is some bijection $\Theta:S_n(\sigma) \to S_n(\tau)$ such that  $f(\pi) = f\left(\Theta(\pi)\right)$.  In other words, the $f$ statistic is equally distributed on the sets $S_n(\sigma)$ and $S_n(\tau)$.  In terms of the bijection $\Theta$, we say that $\Theta$ \emph{preserves} $f$.    This refinement has been heavily studied for patterns of length 3, for example see~\cite{BloomSaracino:On-bijec2009,BloomSaracino:Another-2010,DeutschRobertson:Refined-07,Elizalde:Fixed-po11,RobertsonSaracino:Refined-02}. A particularly nice (and nearly exhaustive) classification of Wilf-equivalent patterns of length 3 and permutation statistics is given by Claesson and Kitaev in~\cite{ClaessonKitaev:Classifi08}. On the other hand, little is known about permutation statistics and patterns of length 4 or greater.  To state a recent conjecture by Dokos et al.~\cite[Conjecture 2.8]{DokosDwyer:Permutat12} with regards to permutation statistics and patterns of length 4 we need the following definition. First, we say that $i$ is a \emph{descent} in $\pi$ if $\pi_i>\pi_{i+1}$ and we denote the set of all descents in $\pi$ by $\des \pi $.  The \emph{major index} is then defined to be $\maj(\pi) = \sum_{i\in \des \pi} i$.   With these definitions, Dokos et al. state the following conjecture:

\begin{conjecture}
The major index is equally distributed on the sets $S_n(2413)$, $S_n(1423)$, and $S_n(2314)$.
\end{conjecture}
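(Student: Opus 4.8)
The plan is to prove this conjecture by constructing two explicit, statistic-preserving bijections as announced in the abstract, namely $\Theta: S_n(1423) \to S_n(2413)$ and $\Omega: S_n(2314) \to S_n(2413)$, each of which preserves the major index. Since equidistribution of $\maj$ is a transitive relation, it suffices to exhibit major-index preserving bijections connecting $S_n(2413)$ to each of $S_n(1423)$ and $S_n(2314)$ separately; the conjecture then follows immediately. In fact, I would aim for the stronger statement that these bijections preserve the entire descent set $\des \pi$, not merely its weighted sum $\maj(\pi) = \sum_{i \in \des \pi} i$, since preserving $\des$ automatically preserves $\maj$ and is typically easier to verify directly from a combinatorial construction.

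First I would develop a structural decomposition of the avoidance classes. For a pattern like $2413$, a natural approach is to locate a distinguished position—often the position of the largest entry $n$, or the first/last entry—and analyze how avoidance constrains the relative order of entries to the left and right of it. The key structural observation to establish is a recursive block decomposition: a permutation avoids $2413$ (respectively $1423$, $2314$) precisely when it splits into smaller avoiding pieces satisfying certain inequalities between the blocks. Once such decompositions are in hand for both $S_n(1423)$ and $S_n(2413)$, the bijection $\Theta$ can be defined recursively by matching corresponding blocks, and similarly for $\Omega$ using the $2314$/$2413$ decomposition. The abstract's promise that $\Theta$ fixes permutations avoiding \emph{both} $1423$ and $2413$ strongly suggests the map acts as the identity on the ``shared skeleton'' and only rearranges the portions witnessing one pattern but not the other.

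The main obstacle will be verifying that the bijection genuinely preserves the descent set (and hence $\maj$) while simultaneously respecting the pattern-avoidance constraints. These two requirements pull in opposite directions: any local rearrangement of entries that is needed to convert a $1423$-witness into a $2413$-structure risks creating or destroying a descent at the boundary of the rearranged block. I would therefore focus most of the technical effort on showing that the entries being permuted all lie in positions whose descent status is controlled by values that do not move, or alternatively that the rearrangement preserves the relative order at each consecutive pair of positions where a descent could occur. A secondary difficulty is confirming that $\Theta$ and $\Omega$ are well-defined bijections—that the image always lands in the target avoidance class and that the construction is reversible; this is where the recursive block decomposition must be tight enough to be inverted step by step. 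The additional invariants advertised in the abstract, such as right-to-left maxima and the \emph{steps} statistic, would serve as useful consistency checks: if the block decomposition is the ``right'' one, these auxiliary statistics should be preserved almost automatically, and their preservation provides strong evidence that the bijection is correct.
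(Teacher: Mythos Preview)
Your high-level plan---build a descent-set preserving bijection $\Theta:S_n(1423)\to S_n(2413)$ via a recursive block decomposition, then handle $S_n(2314)$ separately---is broadly the paper's strategy, but two points deserve comment.

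First, a shortcut you miss: the paper does \emph{not} construct $\Omega$ by a second decomposition. Since $rc(1423)=2314$ and $rc(2413)=2413$, one simply sets $\Omega = cr\circ\Theta\circ rc$; the reverse-complement sends descent sets to descent sets (shifted by $i\mapsto n-i$), so descent-set preservation of $\Omega$ is inherited for free from that of $\Theta$. Your plan to build a parallel decomposition for $2314$ would work in principle but doubles the labor unnecessarily.

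Second, and more substantively, you describe the \emph{steps} statistic as an auxiliary consistency check that ``should be preserved almost automatically.'' In the paper it is the opposite: the recursion for $\Theta$ has a dedicated first case that peels off a step (writing $\pi=\pi'(21,k)$ at the smallest step $k$) precisely because without this case the map fails to be injective---the paper gives the explicit counterexample that $52431$ and $53421$ would otherwise both map to $53421$. The step-handling is thus not a sanity check but the mechanism that makes the inflation-based recursion invertible (Lemma~\ref{lem:recover from wreath} needs $\stp\sigma\subset\{a\}$ to recover the inflation data). If you proceed with only the block decomposition you sketch and treat steps as incidental, you will hit exactly this injectivity obstruction.
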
  

In a private communication, S. Elizalde conjectured further that the descent sets are equally distributed on the sets $S_n(1423)$ and $S_n(2413)$.  Additionally, B. Sagan conjectured that the positions of the $n$ and the $n-1$ are equally distributed on these two sets as well.   In this paper, we confirm Conjecture 1, as well as these stronger claims, by constructing explicit bijections $\Theta:S_n(1423)\to S_n(2413)$ and $\Omega:S_n(2314) \to S_n(2413)$ that preserve the descent set and, in the case of $\Theta$, preserve the position of the $n$ and the $n-1$.  In fact, we show that these maps simultaneously preserve several other permutation statistics. To describe these permutation statistics we require a few definitions.  

First, a \emph{right-to-left maximum} in $\pi$ is an index $i$ such that  $\pi_i>\pi_j$ provided $i<j$.  We denote by $\RL{\pi}$ the set of all right-to-left maxima in $\pi$.  Denoting $\RL{\pi} = \{i_1<\cdots <i_s\}$ we observe that 
$\pi_{i_1} = n$, $i_s = n$, $\pi_{i_1} > \cdots >\pi_{i_s}$, and $ \pi_{i_j}>\pi_k$, provided $i_j<k<i_{j+1}$.

Similarly, we define a  \emph{left-to-right minimum} in $\pi$ to be an index $i$ such that  $\pi_j>\pi_i$ provided $j<i$.  We denote by $\LR{\pi}$ the set of all left-to-right minima in $\pi$.  

The last permutation statistics we will need to consider are called  \emph{steps}, which are defined as follows.  A step in $\pi$ is an index $i$ such that $\pi_i-1 = \pi_{i+1}$.  The set of all steps in $\pi$ is denoted by $\stp \pi $. 

In the next section we prove the existence of a bijection $\Theta:S_n(1423) \to S_n(2413)$ that preserves descents, right-to-left-maxima, steps, and the position of the $n$ and $n-1$.  As a corollary, we show the existence of an $\Omega$ that preserves descents, left-to-right minima, steps, and the positions of the $1$ and the $2$.  

It should be mentioned that Stankova in~\cite{Stankova:Forbidde94} originally proved that $|S_n(1423)| = |S_n(2413)|$ by showing that their respective generating trees are isomorphic.  This result is important in the theory of pattern avoidance as it was one of the key results needed to finalize the Wilf-equivalences among all patterns of length 4.  In light of this, it is all the more interesting that an explicit multiple-statistic-preserving bijection exists between these two sets.  

Before closing this section let us define the interval $[a,b]=\{a,a+1,\ldots, b\}$, provided $a\leq b$. 

\section{The Bijections}
Before we begin the construction of our maps $\Theta$ and $\Omega$, let us state our main theorem and prove a corollary.  

\begin{theorem}[Main Theorem]\label{thm:main thm}
There is an explicit bijection $\Theta:S_n(1423)\to S_n(2413)$
such that $\Theta$ preserves descents, right-to-left maxima, steps, and the position of the $n$ and the $n-1$.  Additionally, if $\pi \in S_n(1423)\ \cap\ S_n(2413)$ then $\Theta(\pi) = \pi$.  
\end{theorem}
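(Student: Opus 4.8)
The plan is to define $\Theta$ recursively and to verify all of its asserted properties simultaneously by induction on $n$, using the right-to-left maxima decomposition as the organizing principle. Writing $\RL{\pi}=\{i_1<\cdots<i_s\}$, recall that $i_1$ is the position of $n$ and that the entries lying strictly between two consecutive right-to-left maxima $i_j$ and $i_{j+1}$ all have value below $\pi_{i_{j+1}}$. I would design $\Theta$ to hold the entire skeleton of right-to-left maxima fixed (both their positions and their values), to recurse on the blocks of entries lying strictly between consecutive right-to-left maxima, and — at the top level — to perform a controlled rearrangement of the prefix $\alpha=\pi_1\cdots\pi_{i_1-1}$ preceding $n$. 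The verification then splits into four tasks: that $\Theta(\pi)$ avoids $2413$, that the four statistics are preserved, that $\Theta$ is a bijection, and that $\Theta$ fixes permutations avoiding both patterns.

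The engine of the construction is a structural lemma describing how a forbidden pattern can straddle the entry $n$ when $n$ plays the role of the ``$4$''. An entry $a$ in $\alpha$ together with an ascent $b<c$ (in left-to-right order) inside the suffix $\beta=\pi_{i_1+1}\cdots\pi_n$ forms a $1423$ exactly when $a<b$, and a $2413$ exactly when $b<a<c$; the two patterns are thus distinguished solely by whether the prefix entry lies below, or between, the two entries of the straddling ascent. This pinpoints precisely which prefix entries must be repositioned, and $\Theta$ performs the minimal such surgery needed to convert every $1423$-obstruction into an admissible $2413$-configuration. Because the right-to-left maxima are held fixed, the position of $n=\pi_{i_1}$ is automatically preserved; the position of $n-1$ is preserved by a short separate argument handling the two cases according to whether $n-1$ precedes or follows $n$ (in the former case it is itself the second right-to-left maximum). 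Preservation of the descent set and of the steps statistic should reduce to the observation that the surgery only permutes entries within value-intervals delimited by consecutive right-to-left maxima, so that no adjacency $\pi_i>\pi_{i+1}$ or $\pi_i-1=\pi_{i+1}$ is created or destroyed; making this bookkeeping exact is routine but must be carried out at each level of the recursion.

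For bijectivity I would construct the inverse $S_n(2413)\to S_n(1423)$ as the mirror of the same recipe — reading off the right-to-left maxima, recursing on the blocks, and reversing the local rearrangement — and then check $\Theta^{-1}\circ\Theta=\mathrm{id}$ and $\Theta\circ\Theta^{-1}=\mathrm{id}$ by induction; alternatively, once $\Theta$ is shown to be injective and to land in $S_n(2413)$, Stankova's equality $|S_n(1423)|=|S_n(2413)|$ immediately upgrades it to a bijection. The fixed-point claim then follows directly from the structural lemma: if $\pi$ avoids both $1423$ and $2413$, then no straddling ascent meets the flagging condition at any level of the recursion, so every stage of $\Theta$ acts as the identity and $\Theta(\pi)=\pi$.

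The step I expect to be the main obstacle is proving that $\Theta$ genuinely lands in $S_n(2413)$ and, symmetrically, that the surgery never reintroduces a $1423$. This is a global avoidance statement that does \emph{not} follow formally from the local lemma, since an occurrence of a forbidden pattern need not use the entry $n$ at all and may combine repositioned prefix entries with entries of the recursively treated blocks. Establishing it will require a careful case analysis, organized around the right-to-left maxima decomposition, showing that the local removal of straddling obstructions creates no new obstruction either across $n$ or within the blocks. Making this case analysis watertight, rather than the statistic-preservation bookkeeping, is the heart of the proof.
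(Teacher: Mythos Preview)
Your proposal is not a proof but a strategy, and the strategy diverges from the paper's in a way that leaves the central difficulty unresolved.  The paper does \emph{not} fix the right-to-left maxima pointwise and recurse on the intervening blocks.  Instead it defines $\Theta$ by a three-branch recursion governed by the pair $\phi(\pi)=(a,b)$ and by whether $\stp\pi$ is empty: if $\pi$ has a step, one contracts the first step, recurses, and re-expands; if $a=0$ one deletes $n$, recurses, and reinserts it; otherwise one splits $\pi$ into two overlapping pieces $\pi^{(1)},\pi^{(2)}$ along the index $a$, recurses on each, and recombines them via an \emph{inflation} $\sigma(\alpha,a+1)$.  The technical work is a sequence of lemmas relating $\RL{}$, $\des$, and $\stp$ to this decomposition and to inflation; $2413$-avoidance of the output then follows from a short argument (Lemma~\ref{lem:2413 avoid}) because any $2413$ in an inflation must lie entirely inside or entirely outside the inflated block.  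In particular, in the paper's worked example the values at the right-to-left maxima change under $\Theta$, so your design choice of freezing both their positions and their values already describes a different map, if one exists at all.

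The genuine gap in your outline is the step statistic.  You assert that ``the surgery only permutes entries within value-intervals delimited by consecutive right-to-left maxima, so that no adjacency $\pi_i>\pi_{i+1}$ or $\pi_i-1=\pi_{i+1}$ is created or destroyed,'' but this inference is false: rearranging entries inside a value-interval can obviously create or destroy a step, since a step is the event that two \emph{specific} consecutive values land in consecutive positions.  The paper singles this out explicitly: it remarks that if one omits the special handling of steps (its {\tt Case 1}), the resulting map is not injective, and the step-free case requires its own lemmas (Lemmas~\ref{lem:step pi}, \ref{lem:step wreath}, \ref{lem:recover from wreath}) to control when inflation introduces a step and to guarantee recoverability.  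Your sketch never specifies the ``minimal surgery'' on the prefix, so there is no mechanism in place to prevent exactly this failure.  Until you write down the rearrangement concretely and prove a step-preservation lemma for it, the argument has a hole at the same point the paper identifies as the delicate one.
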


\begin{corollary}\label{cor:main cor}
There is an explicit bijection $\Omega:S_n(2314)\to S_n(2413)$
such that $\Omega$ preserves descents, left-to-right minima, steps, and the position of the $1$ and the $2$.  Additionally, if $\pi \in S_n(2314)\ \cap\ S_n(2413)$ then $\Omega(\pi) = \pi$.
\end{corollary}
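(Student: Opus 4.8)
The plan is to obtain $\Omega$ directly from $\Theta$ by conjugating with the reverse-complement symmetry, so that no new combinatorial construction is required. Write $\pi^{rc}$ for the reverse-complement of $\pi$, i.e.\ $(\pi^{rc})_i = n+1-\pi_{n+1-i}$; this is an involution on $S_n$. The first step is to record the effect of $rc$ on the relevant pattern classes. A direct computation gives $(2314)^{rc} = 1423$ and $(2413)^{rc} = 2413$, and since $\pi$ contains $\tau$ if and only if $\pi^{rc}$ contains $\tau^{rc}$, the map $rc$ restricts to a bijection $S_n(2314)\to S_n(1423)$ and to a bijection $S_n(2413)\to S_n(2413)$. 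I would then set
\[
\Omega = rc\circ\Theta\circ rc \colon S_n(2314)\to S_n(2413),
\]
which is a bijection as a composite of bijections.

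Next I would record how each statistic transforms under $rc$, writing $m - S = \{m-s : s\in S\}$ for a set $S$ of integers. For the adjacency statistics, a short check shows that $i$ is a descent (respectively, a step) of $\pi^{rc}$ exactly when $n-i$ is a descent (respectively, a step) of $\pi$, so both $\des$ and $\stp$ transform by the involution $j\mapsto n-j$ on $\{1,\dots,n-1\}$. For the single-position statistics, one checks that $i\in\LR{\pi^{rc}}$ iff $n+1-i\in\RL{\pi}$ (and symmetrically), so $rc$ interchanges right-to-left maxima and left-to-right minima via $i\mapsto n+1-i$. Finally, $rc$ carries the value $n$ to the value $1$ and the value $n-1$ to the value $2$, reflecting their positions by $p\mapsto n+1-p$.

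With these transformation rules in hand, every preservation property of $\Omega$ follows formally from the corresponding property of $\Theta$ by a double-reflection cancellation. For instance, since $\Theta$ preserves descents,
\[
\des \Omega(\pi) = n - \des \Theta(rc(\pi)) = n - \des (rc(\pi)) = n - (n-\des\pi) = \des\pi,
\]
and the identical argument handles steps. Because $rc$ swaps the two notions of extrema and $\Theta$ preserves right-to-left maxima, the same two-sided cancellation (now with $i\mapsto n+1-i$) yields $\LR{\Omega(\pi)} = \LR{\pi}$; likewise, since $\Theta$ fixes the positions of $n$ and $n-1$, the double reflection of positions shows $\Omega$ fixes the positions of $1$ and $2$. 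The fixed-point claim is immediate: if $\pi\in S_n(2314)\cap S_n(2413)$ then $rc(\pi)\in S_n(1423)\cap S_n(2413)$, so $\Theta(rc(\pi)) = rc(\pi)$ by the Main Theorem, whence $\Omega(\pi) = rc(rc(\pi)) = \pi$.

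The argument is in essence a symmetry bookkeeping exercise, so I do not anticipate a genuine obstacle. The one place that demands care is keeping the two different index reflections straight --- $j\mapsto n-j$ on $\{1,\dots,n-1\}$ for the adjacency statistics $\des$ and $\stp$, versus $i\mapsto n+1-i$ on $\{1,\dots,n\}$ for the value-sensitive statistics (the extrema and the locations of specified values) --- and verifying that each reflection composes to the identity across the two copies of $rc$ flanking $\Theta$.
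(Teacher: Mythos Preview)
Your proposal is correct and follows essentially the same route as the paper: defining $\Omega$ by conjugating $\Theta$ with the reverse--complement involution, then deducing each preservation property from the corresponding property of $\Theta$ via the transformation rules for $rc$ on descents, steps, extrema, and value positions. The paper's proof is the same argument, written slightly more tersely and leaving some of the routine checks to the reader.
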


In order to prove the corollary we will need a couple of standard definitions.   The \emph{complement} of $\pi\in S_n$ is defined to be $c(\pi) = (n+1- \pi_1)\ldots (n+1-\pi_n)$  and the \emph{reverse} of $\pi$ is appropriately given by $r(\pi) = \pi_n\pi_{n-1}\ldots \pi_1$.  It is clear that $r$ and $c$ are involutions such that $rc = cr$.

\begin{proof}[Proof of corollary]
First, observe that $rc(1423) = 2314$ and $rc(2413) = 2413$.  It follows that we may define $\Omega:S_n(2314) \to S_n(2413)$ as $\Omega = cr\circ\Theta \circ rc$.  Certainly $\Omega$ is bijective, since $r$, $c$, and $\Theta$ are bijections.   It only remains to show that $\Omega$ preserves the stated statistics. To this end fix  $\pi\in S_n(2314)$.   First, it is clear by the definitions involved that $\Omega$ preserves the positions of the $1$ and the $2$ and that  $\Omega(\pi) = \pi$ provided $\pi \in S_n(2314)\ \cap\ S_n(2413)$.  With respect to left-to-right minima, it follows from our Main Theorem that $\RL{rc (\pi)} = \RL{\Theta\circ rc( \pi) }$. As $\LR{cr(\pi)} = \makeset{n+1-i}{i\in \RL{\pi}}$, $c^{-1} = c$ and $r^{-1} = r$ it follows that 
$$\LR{\pi} = \LR{cr\circ rc (\pi)} = \LR{cr\circ\Theta\circ rc( \pi) } = \LR{\Omega(\pi)}.$$
The fact that $\Omega$ preserves descents and steps follows by a similar argument and the observations that $\des(cr(\pi) )= \makeset{n-i}{i\in \des \pi }$ and $\stp(cr(\pi)) = \makeset{n-i}{i\in \stp \pi }$.  The details are left to the reader.  
\end{proof}

To prove  Conjecture 1 it  now only remains to prove the Main Theorem; this occupies the remainder of the section. To motivate what follows we begin with an outline of the construction of  $\Theta$.   As  $\Theta$ will be constructed recursively we will need a way to decompose a 1423-avoiding permutation $\pi$ into two smaller 1423-avoiding permutations.  We will denote these smaller permutations as $\pi^{(1)}$ and $\pi^{(2)}$.  Having done so, we will then apply $\Theta$ to $\pi^{(1)}$ and $\pi^{(2)}$ to obtain two 2413-avoiding permutations, which we will denote $\sigma$ and $\alpha$ respectively.  Lastly, we will use an operation called inflation to combine $\sigma$ and $\alpha$ into one larger 2413-avoiding permutation denoted $\Theta(\pi)$.  

To begin let us define the inflation operation.   

\begin{definition}
Let $\sigma\in S_m$, $\alpha\in S_l$, and fix $1\leq a+1\leq m$.  We define $\sigma(\alpha,a+1)$ to be the permutation 
\[\hat{\sigma}_1\ldots \hat{\sigma}_{a}\ \hat{\alpha}_1\ldots \hat{\alpha}_l\ \hat{\sigma}_{a+2} \ldots \hat{\sigma}_m
\]
of length $l+m-1$, 
where $\hat{\alpha}_i= \alpha_i +\sigma_{a+1} -1$ and 
$ \hat{\sigma}_i = 
\begin{cases}
\sigma_i &\text{ if } \sigma_i<\sigma_{a+1}\\
\sigma_i + l-1 &\text{ otherwise.}
\end{cases}$
\end{definition}

In words, this operation essentially ``inflates" $\sigma_{a+1}$ with the permutation $\alpha$ and, in so doing, translates the other elements of $\sigma$ so that the result is a permutation. As a result we will refer to this operation as \emph{inflation}.    For example, if $\sigma = 316542$ and $\alpha = 531642$ then $\sigma(\alpha,4) = 3\ 1\ 11\ 9\ 7\ 5\ 10\ 8\ 6\ 4\ 2$.  An important property of inflation, which follows easily from the definition, is that
\begin{equation}\label{eqn:wreath prop}
\{\sigma'_{a+1},\ldots,\sigma'_{a+l}\} = [\sigma_{a+1}, \sigma_{a+1} + l -1],
\end{equation}
where $\sigma' = \sigma(\alpha,a+1)$.
To illustrate this property (and the operation of inflation) consider Figure~\ref{fig:wreath}.  This depiction of our example makes it clear that the set of values given by the elements in positions $4$ through $9$ are precisely the interval $[5,10]$

We now turn our attention to showing that $\sigma(\alpha,a+1)$ is 2413-avoiding, provided that $\sigma$ and $\alpha$ are both 2413-avoiding. 

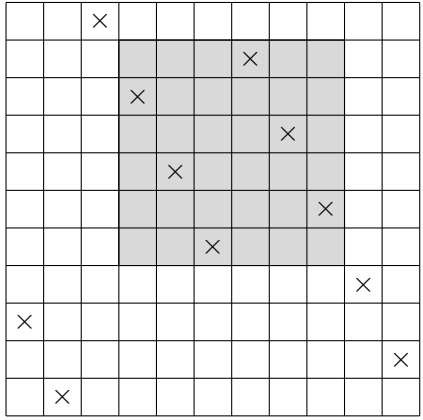
\begin{figure}[h!]
\begin{center}
\begin{tikzpicture}[scale=0.5]

\begin{scope}[shift={(0,0)}]
\draw[fill=gray!30] (3,4) rectangle (9,10);

\draw (0,0) -- (0,11);
\draw (1,0) -- (1,11);
\draw (2,0) -- (2,11);
\draw (3,0) -- (3,11);
\draw (4,0) -- (4,11);
\draw (5,0) -- (5,11);
\draw (6,0) -- (6,11);
\draw (7,0) -- (7,11);
\draw (8,0) -- (8,11);
\draw (9,0) -- (9,11);
\draw (10,0) -- (10,11);
\draw (11,0) -- (11,11);

\draw (0,0) -- (11,0);
\draw (0,1) -- (11,1);
\draw (0,2) -- (11,2);
\draw (0,3) -- (11,3);
\draw (0,4) -- (11,4);
\draw (0,5) -- (11,5);
\draw (0,6) -- (11,6);
\draw (0,7) -- (11,7);
\draw (0,8) -- (11,8);
\draw (0,9) -- (11,9);
\draw (0,10) -- (11,10);
\draw (0,11) -- (11,11);

\node at (0.5,2.5) {$\times$};
\node at (1.5,0.5) {$\times$};
\node at (2.5,10.5) {$\times$};
\node at (3.5,8.5) {$\times$};
\node at (4.5,6.5) {$\times$};
\node at (5.5,4.5) {$\times$};
\node at (6.5,9.5) {$\times$};
\node at (7.5,7.5) {$\times$};
\node at (8.5,5.5) {$\times$};
\node at (9.5,3.5) {$\times$};
\node at (10.5,1.5) {$\times$};
\end{scope}
\end{tikzpicture}
\caption{The picture of $\sigma(\alpha,4)$ where $\sigma = 316542$ and $\alpha = 531642$. The highlighted region represents the ``inflation"  of the element 5 in $\sigma$ by the permutation $\alpha$. }
\label{fig:wreath}
\end{center}
\end{figure}

\begin{lemma}\label{lem:2413 avoid}
If $\sigma\in S_m(2413)$ and $\alpha\in S_l(2413)$ then $\sigma(\alpha,a+1)\in S_{m+l-1}(2413)$.  
\end{lemma}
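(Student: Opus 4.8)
The plan is to prove that $\sigma(\alpha,a+1)$ avoids $2413$ by contradiction: I will assume there is an occurrence of the pattern $2413$ in $\sigma' = \sigma(\alpha,a+1)$ and show that this forces either $\sigma$ or $\alpha$ to contain $2413$, contradicting the hypothesis. The key structural fact I will lean on throughout is Equation~\eqref{eqn:wreath prop}, which tells me the ``inflated block'' occupies positions $a+1$ through $a+l$ and its values fill exactly the interval $[\sigma_{a+1}, \sigma_{a+1}+l-1]$; everything outside this block corresponds, both in position and in relative order, to the elements of $\sigma$ other than $\sigma_{a+1}$, while everything inside corresponds in relative order to $\alpha$. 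In particular, the entries outside the block are either entirely below the block's value-interval (the $\hat\sigma_i = \sigma_i$ case) or entirely above it (the $\hat\sigma_i = \sigma_i + l - 1$ case).

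The main body of the argument is a case analysis on how many of the four entries of the putative $2413$-occurrence lie inside versus outside the inflated block. First I would dispose of the two extreme cases. If all four entries lie inside the block, then by Equation~\eqref{eqn:wreath prop} they are a subsequence of $\hat\alpha_1\ldots\hat\alpha_l$, which is order-isomorphic to $\alpha$, so $\alpha$ contains $2413$ --- contradiction. If none of the four entries lies inside the block, then they are all among the translated copies of $\sigma$'s entries; since the translation is order-preserving on these entries, they are order-isomorphic to four entries of $\sigma$, so $\sigma$ contains $2413$ --- contradiction. The heart of the proof is ruling out the mixed cases, where some entries are inside the block and some are outside.

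For the mixed cases the crucial observation is that the block is contiguous in position and contiguous in value, so it behaves like a single ``fat point'': any entry outside the block is either strictly below \emph{all} block entries and to the left-or-right of the block, or strictly above all of them. The cleanest way to organize this is to note that if two or more of the four pattern entries fall inside the block, I can collapse them to a single representative (they share one value-interval and one position-interval), thereby obtaining a shorter pattern occurrence in $\sigma$ with $\sigma_{a+1}$ playing the role of the collapsed entries; one then checks in each case that the collapsed configuration is still a $2413$-occurrence in $\sigma$. Symmetrically, if exactly one pattern entry lies inside the block, then replacing it by $\sigma_{a+1}$ (which occupies the same position-interval and whose value lies at the bottom of the block's value-interval) still yields a $2413$ in $\sigma$, \emph{provided} the inside entry is not forced to be the ``$4$'' or ``$3$'' by an entry of the same block — but since only one entry is inside, there is no such conflict, and the position/value comparisons with the three outside entries are inherited directly. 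In every mixed subcase the conclusion is a $2413$-occurrence in $\sigma$ (or, when two-or-more inside entries cannot all collapse consistently, a $2413$ in $\alpha$ via the relative order of the inside entries).

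The step I expect to be the main obstacle is the bookkeeping in the mixed cases: I must verify that the ``collapse to $\sigma_{a+1}$'' replacement genuinely preserves all six pairwise order relations that define the $2413$ pattern, using that outside-below entries sit below the whole interval $[\sigma_{a+1},\sigma_{a+1}+l-1]$ and outside-above entries sit above it, together with the positional containment of the block. The subtle point is that an inside entry could be playing the role of the largest symbol ($4$) or the role of an upper-middle symbol, and I need the block's value-interval to be low enough or high enough relative to the outside entries that substituting $\sigma_{a+1}$ does not disturb the required inequalities; this is exactly where the precise form of the translation in the definition (shift by $l-1$ for entries above $\sigma_{a+1}$, no shift below) must be invoked rather than a vague appeal to order-isomorphism. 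I anticipate that a careful but routine enumeration of the four positions for the lone inside entry (and the few genuinely distinct multi-inside configurations) closes the argument.
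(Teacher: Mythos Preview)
Your overall strategy---assume a $2413$ occurrence in $\sigma'=\sigma(\alpha,a+1)$ and case on how many of its four entries lie in the inflated block $[a+1,a+l]$---is exactly the paper's approach, and your treatment of the cases ``all four inside,'' ``none inside,'' and ``exactly one inside'' is correct and matches the paper.

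The muddle is in your ``two or more inside'' case. Your plan there does not work as written: collapsing two or three inside entries to the single point $\sigma_{a+1}$ leaves only three or two entries total, so the result cannot possibly be ``still a $2413$-occurrence in $\sigma$,'' and the parenthetical fallback (``a $2413$ in $\alpha$ via the relative order of the inside entries'') fails for the same reason when only two or three entries are inside. What actually happens---and what the paper exploits directly---is that these subcases are \emph{vacuous}. Since the block is an interval in positions \emph{and} an interval in values (Equation~\eqref{eqn:wreath prop}), the set of inside entries must be contiguous in the position order $x<y<z<w$ and simultaneously contiguous in the value order $\sigma'_z<\sigma'_x<\sigma'_w<\sigma'_y$; a quick check shows that no $2$- or $3$-element subset of $\{x,y,z,w\}$ has this property (equivalently, $2413$ is a simple permutation). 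Hence $|\{x,y,z,w\}\cap[a+1,a+l]|\ge 2$ forces all four to be inside, and you are back in your already-handled ``all inside'' case, giving a $2413$ in $\alpha$. Replacing your collapse argument with this one-line observation gives the paper's proof verbatim.
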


\begin{proof}
Assume for a contradiction that $\sigma' = \sigma(\alpha,a+1)$ contains an occurrence of 2413 and let  $\sigma'_z<\sigma'_x<\sigma'_w<\sigma'_y$ be such an occurrence where $x<y<z<w$.   If $|\{x,y,z,w\} \cap [a+1,a+l]|\leq 1$ it follows that $\sigma$ would contain an occurrence of 2413; so, we must have $|\{x,y,z,w\} \cap [a+1,a+l]|> 1$.  Now~\eqref{eqn:wreath prop} together with the fact that $|\{x,y,z,w\} \cap [a+1,a+l]|> 1$ implies that $\{x,y,z,w\} \subset [a+1,a+l]$.  Thus $\alpha$ must contain an occurrence of 2413.  As this contradicts the fact that $\alpha$ avoids 2413 we conclude that  $\sigma' \in S_{m+l-1}(2413)$.   
\end{proof}

We now turn our attention to the decomposition of $\pi$ we will need for our recursive construction of $\Theta$. 

\begin{definition}
Fix $\pi\in S_n$ so that $\pi \neq n(n-1)\ldots 1$ and let $\RL{\pi} =\{i_1<\cdots <i_r\}$.  Define $\phi(\pi) = (a,b)$ as follows.  First, let $b$ be the smallest right-to-left maximum so that $[b,n] \subset \RL{\pi}$. (In other words, $\pi_b$ is the smallest ascent top in $\pi$.) If $[b,n]= \RL{\pi}$, define $a=0$; otherwise, let $a=\max\left(\RL{\pi}\setminus [b,n]\right)$. 
\end{definition}

It immediately follows from this definition that $\RL{\pi} = \{i_1<\cdots< i_s<a\}\ \cup\ [b,n]$, for appropriately chosen indices $i_1\ldots i_s$.  

We are now ready to define the decomposition of $\pi$ needed. 

\begin{definitions}
Given $\pi\in S_n$ so that $\pi \neq n(n-1)\ldots 1$.  First define
$$\rho(\pi) = |\{i\in \RL{\pi}\ |\  i>b\mbox{ and } \pi_i>\chi(\pi)\}|\mbox{ and }\chi(\pi) = \max \{\pi_i\ |\ a<i<b\}.$$
Using these definitions, set  
$$\pi^{(1)} = \std\left(\pi_1\ldots \pi_a \pi_b \pi_{b+1}\ldots \pi_{b+\rho(\pi)}\right)\qquad\mbox{and}\qquad\pi^{(2)} = \std\left( \pi_{a+1}\ldots \pi_b \pi_{b+\rho(\pi)+1}\ldots \pi_n\right).$$
\end{definitions}

Before proceeding let us make a few remarks regarding this decomposition.  First, our assumption that $\pi \neq n(n-1)\ldots 1$ guarantees for us that $b-a\geq 2$.  As a result $\chi(\pi)>0$ and $|\pi^{(1)}| < n$ since the element $\pi_{a+1}$ is not included in its construction.  Second, the lengths of our two permutations are given by
\begin{equation}\label{eq:decomp size}
|\pi^{(1)}| = a+\rho(\pi)+1<n \qquad\mbox{and}\qquad|\pi^{(2)}| = n-a-\rho(\pi).
\end{equation}

Lastly, we observe that if $\pi\in S_n(1423)$, then 
\begin{equation}\label{eq:interval}
\{\pi_{a+1},\ldots,\pi_{b-1},\pi_{b+\rho(\pi)+1},\ldots, \pi_n\} = [1,\chi(\pi)] = [1, |\pi^{(2)}|-1].
\end{equation}
To see this note that if $\pi_i <\chi(\pi)$ and $i<a$ then $\pi_i\pi_a\chi(\pi)\pi_b$ would form a 1423-pattern.
An example of this decomposition which illustrates these aforementioned observations is in  Figure~\ref{fig:def pi}.

\begin{figure}[h!]
\begin{center}
\begin{tikzpicture}[scale=0.5]

\begin{scope}[shift={(0,0)}]
\draw[fill=gray!30] (6,0) rectangle (13,5);
\draw[fill=gray!30] (9,8) rectangle (10,9);

\foreach \i in {0,...,13}
	{
		\draw (\i,0) -- (\i,13);
		\draw (0,\i) -- (13,\i);
	}

\node at (0.5,9.5) {$\times$};
\node at (1.5,11.5) {$\times$};
\node at (2.5,12.5) {$\times$};
\node at (3.5,7.5) {$\times$};
\node at (4.5,5.5) {$\times$};
\node at (5.5,10.5) {$\times$};
\node at (6.5,4.5) {$\times$};
\node at (7.5,2.5) {$\times$};
\node at (8.5,0.5) {$\times$};
\node at (9.5,8.5) {$\times$};
\node at (10.5,6.5) {$\times$};
\node at (11.5,3.5) {$\times$};
\node at (12.5,1.5) {$\times$};
\end{scope}
\end{tikzpicture}
\caption{Here $\pi = 10\ 12\ 13\ 8\ 6\ 11\ 5\ 3\ 1\ 9\ 7\ 4\ 2$, and $\phi(\pi) = (6, 10)$, $\chi(\pi) = 5$, $\rho(\pi) = 1$.  Additionally, $\pi^{(1)} = \std(10\ 12\ 13\ 8\ 6\ 11\ 9\ 7)$ and $\pi^{(2)} = \std(531942)$.  To help visualize this decomposition we have shaded the elements of $\pi$ that become $\pi^{(2)}$}.
\label{fig:def pi}
\end{center}
\end{figure}
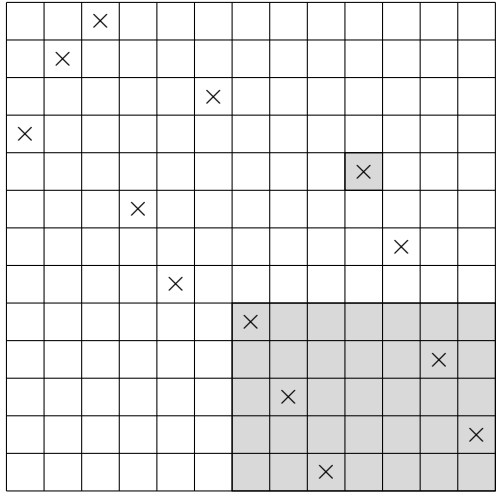

We are now in a position to define our map $\Theta$.

\begin{definition}
Fix $\pi\in S_n(1423)$ and set $\phi(\pi) = (a,b)$. We define the mapping $\Theta:S_n(1423) \to S_n(2413)$ as follows.       First, set $\Theta(1) = 1$.  Next, define $\Theta$ recursively via the following three cases:

\bigskip
{\tt Case 1:} $\stp \pi \ne \emptyset$

\medskip
First, let $k=\min\left(\stp \pi \right)$ so that $\pi'$ is the unique permutation such that $\pi = \pi'(21,k)$.  Now define $\T = \Tp(21,k)$.

\bigskip
{\tt Case 2:} $\stp \pi = \emptyset$ and $a=0$

\medskip
Denote by $\pi'$ the permutation obtained from $\pi$ by removing $n$. Define $\T$ as the permutation obtained from $\Tp$ by inserting $n$ in the $b$th position.   

\bigskip
{\tt Case 3:} $\stp \pi = \emptyset$ and $a\neq 0$

\medskip
Define $\T = \sigma(\alpha,a+1)$, where $\sigma = \Theta(\pi^{(1)})$ and $\alpha = \Theta(\pi^{(2)})$.
\end{definition}

It follows from the definition of $\Theta$ and a straightforward inductive argument that if $\pi \in S_n(1423)\ \cap\ S_n(2413)$ then $\Theta(\pi) = \pi$. 

Before proving that $\Theta$ is well-defined, bijective, and has the stated properties let us first remark as to the motivation of {\tt Case 1}.  Simply, the reason for this case is that without it, $\Theta$ will not be injective.  For example, if we ignored the step statistic, i.e., removed {\tt Case 1} from our definition of $\Theta$, then one can check that the permutations $52431$ and $53421$ would both map to $53421$.  The reader might find it helpful to keep this in mind while reading Lemma~\ref{lem:step wreath}, which is the crucial lemma for showing that $\Theta$ is injective.

To illustrate many of the arguments to follow let us now work out a complete example of the action of our map $\Theta$ on the permutation $\pi$ depicted in Figure~\ref{fig:def pi}.  As this value of $\pi$ falls in {\tt Case 3} we have

\begin{center}
\begin{tikzpicture}[scale=0.5]
\begin{scope}[shift={(0,0)}]
\node at (-1.3,4) {$\pi^{(1)}=$};
\foreach \i in {0,...,8}
	{
		\draw (\i,0) -- (\i,8);
		\draw (0,\i) -- (8,\i);
	}
\node at (0.5,4.5) {$\times$};
\node at (1.5,6.5) {$\times$};
\node at (2.5,7.5) {$\times$};
\node at (3.5,2.5) {$\times$};
\node at (4.5,0.5) {$\times$};
\node at (5.5,5.5) {$\times$};
\node at (6.5,3.5) {$\times$};
\node at (7.5,1.5) {$\times$};

\end{scope}

\begin{scope}[shift={(14,0)}]
\node at (-1.3,3) {$\pi^{(2)}=$};
\foreach \i in {0,...,6}
	{
		\draw (\i,0) -- (\i,6);
		\draw (0,\i) -- (6,\i);
	}
\node at (0.5,4.5) {$\times$};
\node at (1.5,2.5) {$\times$};
\node at (2.5,0.5) {$\times$};
\node at (3.5,5.5) {$\times$};
\node at (4.5,3.5) {$\times$};
\node at (5.5,1.5) {$\times$};
\end{scope}
\end{tikzpicture}\ .

\end{center}
Now let $\lambda = \pi^{(1)}$. Observing that $\phi(\lambda) = (3,6)$, we have 
\begin{center}
\begin{tikzpicture}[scale=0.5]
\begin{scope}[shift={(0,0)}]
\node at (-1.3,2.5) {$\lambda^{(1)}=$};
\foreach \i in {0,...,5}
	{
		\draw (\i,0) -- (\i,5);
		\draw (0,\i) -- (5,\i);
	}
\node at (0.5,1.5) {$\times$};
\node at (1.5,3.5) {$\times$};
\node at (2.5,4.5) {$\times$};
\node at (3.5,2.5) {$\times$};
\node at (4.5,0.5) {$\times$};

\end{scope}

\begin{scope}[shift={(14,0)}]
\node at (-1.3,2) {$\lambda^{(2)}=$};
\foreach \i in {0,...,4}
	{
		\draw (\i,0) -- (\i,4);
		\draw (0,\i) -- (4,\i);
	}
\node at (0.5,2.5) {$\times$};
\node at (1.5,0.5) {$\times$};
\node at (2.5,3.5) {$\times$};
\node at (3.5,1.5) {$\times$};

\end{scope}
\end{tikzpicture}\ .
\end{center}
Since $\lambda^{(j)}\in S_4(1423)\cap S_4(2413)$ it follows by our (yet unproven) claim that $\Theta(\lambda^{(j)})=\lambda^{(j)}$ for $j=1,2$.  Likewise, as $\pi^{(2)}$ is 2413-avoiding we also have $\alpha = \Theta(\pi^{(2)})= \pi^{(2)}$.  Therefore, we obtain

\begin{center}
\begin{tikzpicture}[scale=0.5]
\begin{scope}[shift={(0,0)}]
\draw[fill=gray!20] (3,2) rectangle (7,6);
\node at (-7,4) {$\sigma = \Theta(\pi^{(1)}) = \lambda^{(1)}(\lambda^{(2)},3+1)=$};
\foreach \i in {0,...,8}
	{
		\draw (\i,0) -- (\i,8);
		\draw (0,\i) -- (8,\i);
	}
\node at (0.5,1.5) {$\times$};
\node at (1.5,6.5) {$\times$};
\node at (2.5,7.5) {$\times$};

\node at (3.5,4.5) {$\times$};
\node at (4.5,2.5) {$\times$};
\node at (5.5,5.5) {$\times$};
\node at (6.5,3.5) {$\times$};

\node at (7.5,0.5) {$\times$};

\end{scope}
\end{tikzpicture}\ ,
\end{center}
and finally, as $\phi(\pi) = (6,10)$, 
\begin{center}
\begin{tikzpicture}[scale=0.5]

\node at (-5,6.5) {$\Theta(\pi) = \sigma(\alpha,6+1)=$};
\draw[fill=gray!20] (6,3) rectangle (12,9);
\begin{scope}[shift={(0,0)}]

\foreach \i in {0,...,13}
	{
		\draw (\i,0) -- (\i,13);
		\draw (0,\i) -- (13,\i);
	}

\node at (0.5,1.5) {$\times$};
\node at (1.5,11.5) {$\times$};
\node at (2.5,12.5) {$\times$};
\node at (3.5,9.5) {$\times$};
\node at (4.5,2.5) {$\times$};
\node at (5.5,10.5) {$\times$};

\node at (6.5,7.5) {$\times$};
\node at (7.5,5.5) {$\times$};
\node at (8.5,3.5) {$\times$};
\node at (9.5,8.5) {$\times$};
\node at (10.5,6.5) {$\times$};
\node at (11.5,4.5) {$\times$};

\node at (12.5,0.5) {$\times$};

\end{scope}
\end{tikzpicture}\ .

\end{center}

We now turn our attention to the proof of the Main Theorem.  We begin with several technical lemmas that deal with the relationship between inflation (respectively, decomposition) and the permutation statistics of interest, namely, right-left-maxima, descents, and steps.  For Lemmas~\ref{lem:RL pi} to~\ref{lem:recover from wreath} we establish the following conventions.

\begin{conventions}
Fix $\pi\in S_n(1423)$ so that $\pi \neq n(n-1)\ldots 1$ and let $\phi(\pi) = (a,b)$.  Additionally, we set 
\[
m = |\pi^{(1)}| = a+\rho(\pi)+1\qquad\mbox{and}\qquad l =|\pi^{(2)}| = n-a-\rho(\pi)
\] 
and let $\RL{\pi} = \{i_1<\cdots <i_s<a\}\ \cup \ [b,n]$.
\end{conventions}

\begin{lemma}\label{lem:RL pi} $\RL{\pi^{(1)}} = \{i_1<\cdots< i_s<a\}\ \cup [a+1,m]\quad\mbox{ and }\quad\RL{\pi^{(2)}} = [b-a, l]$.  
\end{lemma}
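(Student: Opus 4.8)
The plan is to transfer the right-to-left maximum condition from $\pi$ to each $\pi^{(j)}$ through standardization, using \eqref{eq:interval} to control the entries discarded in passing from $\pi$ to the two smaller permutations. First I would fix the bookkeeping. The positions of $\pi^{(1)} = \std(\pi_1\ldots\pi_a\,\pi_b\ldots\pi_{b+\rho(\pi)})$ split as $1,\ldots,a$ (carrying $\pi_1,\ldots,\pi_a$) followed by $a+1,\ldots,m$ (carrying $\pi_b,\ldots,\pi_{b+\rho(\pi)}$), and the positions of $\pi^{(2)} = \std(\pi_{a+1}\ldots\pi_b\,\pi_{b+\rho(\pi)+1}\ldots\pi_n)$ split as $1,\ldots,b-a$ (carrying $\pi_{a+1},\ldots,\pi_b$) followed by $b-a+1,\ldots,l$ (carrying $\pi_{b+\rho(\pi)+1},\ldots,\pi_n$). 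Since standardization preserves relative order, an index is a right-to-left maximum of $\pi^{(j)}$ precisely when the corresponding entry of $\pi$ exceeds every later surviving entry.

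Two facts will do most of the work. Because $[b,n]\subseteq\RL{\pi}$, the tail $\pi_b>\pi_{b+1}>\cdots>\pi_n$ is strictly decreasing. And by \eqref{eq:interval} the entries in positions $a+1,\ldots,b-1$ and $b+\rho(\pi)+1,\ldots,n$ are precisely $1,\ldots,\chi(\pi)$, so the complementary entries, those in positions $1,\ldots,a$ and $b,\ldots,b+\rho(\pi)$, are precisely $\chi(\pi)+1,\ldots,n$; in particular $\pi_j>\chi(\pi)$ whenever $1\le j\le a$ or $b\le j\le b+\rho(\pi)$, while $\pi_k\le\chi(\pi)$ whenever $a<k<b$ or $b+\rho(\pi)<k\le n$.

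For $\RL{\pi^{(1)}}$ I would first note that positions $a+1,\ldots,m$ carry the strictly decreasing values $\pi_b>\cdots>\pi_{b+\rho(\pi)}$, so each beats every later entry of $\pi^{(1)}$ and hence $[a+1,m]\subseteq\RL{\pi^{(1)}}$. For an index $j\le a$ I would show $j\in\RL{\pi^{(1)}}$ iff $j\in\RL{\pi}$. If $j\in\RL{\pi}$ then $\pi_j$ beats every later entry of $\pi$, in particular the subset of them surviving into $\pi^{(1)}$, giving $j\in\RL{\pi^{(1)}}$. Conversely, if $j\in\RL{\pi^{(1)}}$ then $\pi_j$ beats $\pi_{j+1},\ldots,\pi_a$ and $\pi_b,\ldots,\pi_{b+\rho(\pi)}$; and since $j\le a$ forces $\pi_j>\chi(\pi)$, it also beats every remaining entry of $\pi$ (all of which are at most $\chi(\pi)$), so $\pi_j$ beats all later entries of $\pi$ and $j\in\RL{\pi}$. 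As the convention gives $\RL{\pi}\cap[1,a]=\{i_1<\cdots<i_s<a\}$, this yields $\RL{\pi^{(1)}}=\{i_1<\cdots<i_s<a\}\cup[a+1,m]$.

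For $\RL{\pi^{(2)}}$ the argument is shorter. Positions $b-a,b-a+1,\ldots,l$ carry the values $\pi_b,\pi_{b+\rho(\pi)+1},\ldots,\pi_n$, which are strictly decreasing since $\pi_b$ is the largest tail entry, so $[b-a,l]\subseteq\RL{\pi^{(2)}}$. Meanwhile each of positions $1,\ldots,b-a-1$ carries one of $\pi_{a+1},\ldots,\pi_{b-1}$, each at most $\chi(\pi)<\pi_b$; as $\pi_b$ occupies the later position $b-a$, none of these earlier positions can be a right-to-left maximum, so $\RL{\pi^{(2)}}=[b-a,l]$. The degenerate case $a=0$ is subsumed, since then $\{i_1<\cdots<i_s<a\}$ is empty and the interval $[a+1,m]$ covers all of $\pi^{(1)}$. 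I do not expect a genuine obstacle here: once \eqref{eq:interval} is invoked the maximum conditions transfer almost mechanically, and the only real care needed is keeping straight the correspondence between the positions of $\pi$ and those of the two smaller permutations.
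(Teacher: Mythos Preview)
Your argument is correct and follows the same approach as the paper: identify, in each of the two subsequences defining $\pi^{(1)}$ and $\pi^{(2)}$, exactly which entries dominate all entries to their right, and then transfer this through standardization. The paper's proof is terse to the point of merely asserting the answer, whereas you spell out the verification; your invocation of \eqref{eq:interval} is valid, though one could equally argue directly from the structure of $\RL{\pi}$ (for $a<k<b$ the nearest right-to-left maximum to the right of $k$ is $b$, forcing $\pi_k<\pi_b$), which is presumably what the paper has in mind.
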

\begin{proof}
The first claim follows since the elements in the sequence $\pi_1\ldots \pi_a\pi_b\pi_{b+1}\ldots \pi_{b+\rho(\pi)}$ with the property that all the values to their right are smaller are
$\left\{\pi_{i_1},\ldots, \pi_{i_s},\pi_a,\pi_b,\pi_{b+1},\ldots, \pi_{b+\rho(\pi)}\right\}$.
Similarly, the second claim follows since the elements in the sequence $\pi_{a+1}\ldots \pi_{b}\pi_{b+\rho(\pi)+1}\ldots\pi_n$
with this same property are $\left\{\pi_b,\pi_{b+\rho(\pi)+1},\ldots, \pi_n\right\}$. 
\end{proof}

\begin{lemma}\label{lem:des pi} 
The relationship between the descents of $\pi$ and the descents of $\pi^{(1)}$ and $\pi^{(2)}$ is as follows:
$$\des \pi^{(1)} = \makeset{i\in\des \pi }{1\leq i\leq a}\ \cup [a+1,m-1]$$
and 
$$\des \pi^{(2)} = \makeset{i-a}{i\in\des\pi \mbox{ and } a< i< b}\ \cup\  [b-a, l-1].$$
\end{lemma}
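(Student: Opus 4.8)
The plan is to reduce everything to the single observation that standardization is order-preserving, so that the descent set of $\pi^{(j)}$ is literally the set of descent positions of the underlying subsequence of $\pi$ that defines it; the only real work is then to translate those positions back into positions of $\pi$ and to understand the two ``junctions'' where entries of $\pi$ that are not adjacent become neighbors in $\pi^{(j)}$. The structural fact that makes these junctions transparent is that $[b,n]\subseteq\RL{\pi}$, so by the properties of right-to-left maxima recalled in the introduction the values $\pi_b>\pi_{b+1}>\cdots>\pi_n$ form a strictly decreasing sequence; likewise $a\in\RL{\pi}$ (when $a\neq 0$) with $a<b$ forces $\pi_a>\pi_b$, and since $a$ is a right-to-left maximum we also have $\pi_a>\pi_{a+1}$, so that $a\in\des\pi$.

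For $\pi^{(1)} = \std(\pi_1\ldots\pi_a\pi_b\pi_{b+1}\ldots\pi_{b+\rho(\pi)})$ I would split the $m-1$ adjacent comparisons into three ranges. For positions $1\le j\le a-1$ the two compared entries are $\pi_j$ and $\pi_{j+1}$, which are adjacent in $\pi$ as well, so $j$ is a descent of $\pi^{(1)}$ exactly when $j\in\des\pi$; this contributes $\makeset{i\in\des\pi}{1\le i\le a-1}$. Position $a$ is the first junction: it compares $\pi_a$ with $\pi_b$, and since $\pi_a>\pi_b$ it is always a descent; as $a\in\des\pi$ as well, absorbing this position upgrades the previous contribution to $\makeset{i\in\des\pi}{1\le i\le a}$. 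Finally, for $a+1\le j\le m-1$ the compared entries are two consecutive elements of $\pi_b\ldots\pi_{b+\rho(\pi)}$, all of which lie in $[b,n]$ and are therefore strictly decreasing, so every such $j$ is a descent, contributing the full interval $[a+1,m-1]$. Taking the union gives the first claim.

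The argument for $\pi^{(2)} = \std(\pi_{a+1}\ldots\pi_b\pi_{b+\rho(\pi)+1}\ldots\pi_n)$ is parallel. For positions $1\le j\le b-a-1$ the compared entries $\pi_{a+j},\pi_{a+j+1}$ are adjacent in $\pi$ with $a<a+j<b$, so such $j$ is a descent precisely when $a+j\in\des\pi$; reindexing by $i=a+j$ yields $\makeset{i-a}{i\in\des\pi \text{ and } a<i<b}$. Position $b-a$ is the second junction, comparing $\pi_b$ with $\pi_{b+\rho(\pi)+1}$; both indices lie in $[b,n]$ with $b<b+\rho(\pi)+1$, so $\pi_b>\pi_{b+\rho(\pi)+1}$ and $b-a$ is a descent. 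For $b-a+1\le j\le l-1$ the compared entries are consecutive elements of $\pi_{b+\rho(\pi)+1}\ldots\pi_n$, again a strictly decreasing run inside $[b,n]$, so every such $j$ is a descent; together with position $b-a$ this produces the interval $[b-a,l-1]$, and the union is the second claim.

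The only genuine subtlety---and hence the step I would be most careful about---is the bookkeeping at the two junction positions $a$ and $b-a$, where entries that are far apart in $\pi$ become neighbors after the intermediate block is removed; here one must verify the correct inequality across the gap rather than appeal directly to $\des\pi$. Everything else is an immediate consequence of order-preservation under standardization together with the decreasing-run property of the right-to-left maxima occupying $[b,n]$.
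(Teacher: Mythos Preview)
Your proof is correct and follows essentially the same approach as the paper. The paper's proof is a one-line appeal to the definitions, Lemma~\ref{lem:RL pi}, and the fact that every right-to-left maximum (other than the last position) is a descent; your argument unpacks exactly this, replacing the citation of Lemma~\ref{lem:RL pi} by the equivalent direct observation that $\pi_b>\pi_{b+1}>\cdots>\pi_n$ since $[b,n]\subseteq\RL{\pi}$, and handling the junction at position $a$ explicitly.
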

\begin{proof}
This proof follows easily by the definition of $\pi^{(1)}$ and $\pi^{(2)}$, Lemma~\ref{lem:RL pi}, and the fact that if $i\in \RL{\pi^{(j)}}$ and $i<|\pi^{(j)}|$ then $i\in \des \pi^{(j)}$, where $j=1,2$.
\end{proof}

\begin{lemma}\label{lem:step pi}
If $\stp \pi = \emptyset$, then $\stp\pi^{(1)} \subset\{a\}$ and $\stp\pi^{(2)} = \emptyset$.
\end{lemma}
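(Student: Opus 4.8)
The plan is to exploit the fact that, after standardization, each factor differs from the corresponding entries of $\pi$ only by a relabeling that preserves which retained entries are consecutive in value. The first step is to pin down the value sets. By \eqref{eq:interval} the entries deleted in forming $\pi^{(1)}$ are exactly those in positions $\{a+1,\dots,b-1\}\cup\{b+\rho(\pi)+1,\dots,n\}$, whose values are precisely $[1,\chi(\pi)]$; hence the entries of $\pi^{(1)}$ fill the interval $[\chi(\pi)+1,n]$, so $\std$ acts on them by subtracting $\chi(\pi)$ and therefore preserves every step relation among retained entries. Dually, the word defining $\pi^{(2)}$ uses the values $[1,\chi(\pi)]\cup\{\pi_b\}$, and since $\pi_b\notin[1,\chi(\pi)]$ we have $\pi_b>\chi(\pi)$; thus standardizing $\pi^{(2)}$ fixes every value in $[1,\chi(\pi)]$ and sends $\pi_b$ to the maximum value $l$.

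For $\pi^{(1)}$ I would note that its defining word is the concatenation of the two $\pi$-contiguous runs $\pi_1\cdots\pi_a$ and $\pi_b\cdots\pi_{b+\rho(\pi)}$. Because $\std$ merely shifts, a step of $\pi^{(1)}$ internal to either run would be a step of $\pi$ at the same pair of positions, which is impossible as $\stp\pi=\emptyset$. The decomposition creates exactly one new adjacency, between $\pi_a$ and $\pi_b$ (the $a$th and $(a+1)$st letters of $\pi^{(1)}$); this pair is a step of $\pi^{(1)}$ iff $\pi_a-\pi_b=1$, which the hypothesis does not forbid. This yields $\stp\pi^{(1)}\subseteq\{a\}$.

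For $\pi^{(2)}$ the word is the concatenation of the runs $\pi_{a+1}\cdots\pi_b$ and $\pi_{b+\rho(\pi)+1}\cdots\pi_n$. Steps internal to either run and not involving $\pi_b$ again coincide with steps of $\pi$ and so cannot occur. The pair $(\pi_{b-1},\pi_b)$ is an \emph{ascent}, since $\pi_{b-1}\le\chi(\pi)<\pi_b$, so it is not a step. The only remaining new adjacency is the junction $(\pi_b,\pi_{b+\rho(\pi)+1})$: here $\pi_b$ standardizes to the maximum $l$ while $\pi_{b+\rho(\pi)+1}\le\chi(\pi)$, and in fact $\pi_{b+\rho(\pi)+1}<\chi(\pi)$ because $\chi(\pi)$ is attained at a position strictly between $a$ and $b$, distinct from $b+\rho(\pi)+1$. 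Hence after standardization the drop across the junction is at least $2$, so it is not a step, and $\stp\pi^{(2)}=\emptyset$.

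The main obstacle is the junction of $\pi^{(2)}$: one must rule out the relabeling $\pi_b\mapsto l$ accidentally producing a step there, which amounts to showing $\pi_{b+\rho(\pi)+1}$ lies strictly below $\chi(\pi)$. This is where the precise definition of $\rho(\pi)$ enters — it guarantees that the positions of $(b,n]$ carrying a value exceeding $\chi(\pi)$ are exactly the prefix $b+1,\dots,b+\rho(\pi)$ of the decreasing run $[b,n]$, so $\pi_{b+\rho(\pi)+1}\le\chi(\pi)$ — together with the distinctness of the entries to upgrade this to a strict inequality. The boundary configurations $a=0$ (empty first block, so the claim reads $\stp\pi^{(1)}=\emptyset$), $\rho(\pi)=0$, and $b+\rho(\pi)=n$ (empty second block of $\pi^{(2)}$) should be checked to fall under the same reasoning.
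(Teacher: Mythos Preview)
Your argument is correct and follows the same route as the paper: both rely on Equation~\eqref{eq:interval} to identify the value sets of the two factors as intervals (so that $\std$ is just a shift on $\pi^{(1)}$ and fixes all but the top value of $\pi^{(2)}$), after which any step internal to a contiguous run would already be a step of $\pi$, leaving only the junctions to check. The paper compresses all of this into a single sentence citing \eqref{eq:interval} and the remark that $\stp\pi^{(1)}=\{a\}$ iff $\pi_a-1=\pi_b$; your write-up simply unpacks that sentence, including the verification at the $\pi^{(2)}$ junction that $\pi_{b+\rho(\pi)+1}<\chi(\pi)$.
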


\begin{proof}
This readily follows from Equation~\eqref{eq:interval}.  Note that $\stp{\pi^{(1)}} = \{a\}$ if and only if\\ $\pi_a -1= \pi_b$.
\end{proof}

\begin{conventions}
For Lemmas~\ref{lem:RL wreath} to \ref{lem:recover from wreath}  fix  $\sigma\in S_m$,  and $\alpha\in S_l$ where $m+l-1 = n$.  Set 
$$\RL{\sigma} = \{i_1<\cdots< i_s<a\}\ \cup\ [a+1,m]$$ and $\RL{\alpha} = [b-a,l]$. Lastly, let $\sigma' = \sigma(\alpha,a+1)$. 
\end{conventions}

\begin{lemma}\label{lem:RL wreath}$\RL{\sigma'} = \{i_1,\cdots, i_s, a\}\ \cup\ [b,n]$.
\end{lemma}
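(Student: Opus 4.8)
The plan is to directly compute the right-to-left maxima of $\sigma' = \sigma(\alpha,a+1)$ by tracking how the inflation operation redistributes and relabels the values of $\sigma$ and $\alpha$. Recall that $\sigma'$ is obtained by replacing $\sigma_{a+1}$ (which lies in the interval $[a+1,m]$ of right-to-left maxima of $\sigma$) with a translated copy of $\alpha$, while the entries $\sigma_i$ with $\sigma_i<\sigma_{a+1}$ keep their values and those with $\sigma_i>\sigma_{a+1}$ are shifted up by $l-1$. The key structural fact I would lean on is Equation~\eqref{eqn:wreath prop}, which says the inflated block occupies positions $[a+1,a+l]$ and exactly fills the value interval $[\sigma_{a+1},\sigma_{a+1}+l-1]$.

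First I would handle the positions to the left of the inflated block, namely $1,\ldots,a$. An index $i\le a$ is a right-to-left maximum of $\sigma'$ precisely when $\hat\sigma_i$ exceeds every later entry of $\sigma'$; since the inflation only translates values upward in an order-preserving way above the block and leaves the relative order among the original entries intact, $i$ is an RL-maximum of $\sigma'$ if and only if it was an RL-maximum of $\sigma$. Among $\{i_1,\ldots,i_s,a+1,\ldots,m\}$ the indices surviving in this range are exactly $i_1,\ldots,i_s$ together with $a$ (note $a$ itself is an RL-maximum of $\sigma$ because $a+1\in[a+1,m]\subset\RL{\sigma}$ forces $\sigma_a>\sigma_{a+1}$, and after inflation $\sigma_{a+1}$ is the bottom of the block so $\hat\sigma_a$ still dominates everything to its right). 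This produces the set $\{i_1,\ldots,i_s,a\}$.

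Next I would analyze the inflated block in positions $[a+1,a+l]$. Within this block the values are $\alpha_i+\sigma_{a+1}-1$, so the relative order is that of $\alpha$, and an index in the block is an RL-maximum of $\sigma'$ if and only if (i) the corresponding index is an RL-maximum of $\alpha$, and (ii) the block value dominates all entries of $\sigma'$ lying to its right outside the block. Since every entry $\sigma_j$ with $j>a+1$ and $\sigma_j<\sigma_{a+1}$ keeps a value below the block, the external constraint (ii) is automatic, so the block contributes exactly the images of $\RL{\alpha}=[b-a,l]$. Finally, for positions $a+l+1,\ldots,n$, the inflation shifts these original entries $\sigma_{a+2},\ldots,\sigma_m$ to the right of the block while preserving their relative order and their status against each other; I would argue these are RL-maxima of $\sigma'$ exactly when the original index was an RL-maximum of $\sigma$ in $[a+2,m]$. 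Translating the block and tail index sets into the coordinates of $\sigma'$ (each block-RL-maximum at relative position $p\in[b-a,l]$ lands at position $a+p$, giving $[b,a+l]$, and the tail RL-maxima fill out $[a+l+1,n]$) assembles everything into $\{i_1,\ldots,i_s,a\}\cup[b,n]$, as claimed.

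The main obstacle I anticipate is bookkeeping the value comparisons cleanly across the three regions simultaneously, particularly verifying that no entry inside the inflated block is ``killed'' by a large entry sitting to its right in the tail. The crux is precisely Equation~\eqref{eqn:wreath prop}: because the block values form the contiguous interval $[\sigma_{a+1},\sigma_{a+1}+l-1]$ and every tail entry is either below this interval (if it was below $\sigma_{a+1}$ in $\sigma$) or uniformly above it (if it was above, having been shifted by $l-1$), the RL-maximum condition decouples neatly into the within-block condition and the within-$\sigma$ condition. I would make this decoupling explicit and then reconcile the index arithmetic to land on the interval $[b,n]$; the rest is a routine matching of the remaining indices.
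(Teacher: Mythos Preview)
Your approach is essentially the paper's: split $\sigma'$ into the three regions (left of the block, the inflated block, the tail) and read off the right-to-left maxima in each. The paper does this in three lines; you give more detail, which is fine.

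There is one genuine slip, though. You list $\RL{\sigma}$ as $\{i_1,\ldots,i_s,a+1,\ldots,m\}$, i.e., you read the convention $\RL{\sigma}=\{i_1<\cdots<i_s<a\}\cup[a+1,m]$ as \emph{excluding} $a$, and then try to recover $a\in\RL{\sigma}$ via the claim ``$a+1\in\RL{\sigma}$ forces $\sigma_a>\sigma_{a+1}$.'' That implication is false in general (take $\sigma=12$). In the paper's notation the set $\{i_1<\cdots<i_s<a\}$ \emph{includes} $a$ (compare the line in the Main Theorem proof where $\{i_1<\cdots<i_s<a\}$ and $\{i_1,\ldots,i_s,a\}$ are used interchangeably), so $a\in\RL{\sigma}$ is part of the hypothesis and $\sigma_a>\sigma_{a+1}$ follows immediately. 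Once you correct this reading, your ``$i\le a$ is an RL-maximum of $\sigma'$ iff it was one of $\sigma$'' gives exactly $\{i_1,\ldots,i_s,a\}$, and the rest of your argument (the block contributes $[b,a+l]$ because $a+1\in\RL{\sigma}$ forces every tail entry below the block; the tail contributes $[a+l+1,n]$) is correct and matches the paper.
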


\begin{proof}
It is clear from the definition of inflation that 
$$\makeset{i\in\RL{\sigma'}}{i\leq a}=\{i_1<\cdots< i_s<a\}\qquad\mbox{and}\qquad\makeset{i\in\RL{\sigma'}}{ a+l<i} = [a+l+1, m+l-1].$$
Lastly, since we are inflating the $(a+1)$st element of $\sigma$ where $a+1\in \RL{\sigma}$ it follows that 
$$ \makeset{i\in\RL{\sigma'}}{ a+1\leq i\leq a+l} = [b, a+l],$$
which completes the proof.  
\end{proof}

\begin{lemma}\label{lem:des wreath} If  $\des \sigma = \makeset{i}{i\in \des \pi, 1\leq i\leq a}\ \cup \ [a+1,m-1]$
and $$\des \alpha = \makeset{i-a}{i\in \des \pi, a< i< b}\ \cup \ [b-a, l-1],$$
then  $\des \sigma' = \des \pi $.
\end{lemma}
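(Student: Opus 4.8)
The plan is to compute $\des \sigma'$ directly from the word
$\hat{\sigma}_1\ldots \hat{\sigma}_{a}\,\hat{\alpha}_1\ldots \hat{\alpha}_l\,\hat{\sigma}_{a+2}\ldots \hat{\sigma}_m$
and match it against $\des \pi$ position by position, using that $\sigma'$ has length $n$ so that both descent sets live in $[1,n-1]$. I would partition $[1,n-1]$ into five pieces coming from the three blocks of $\sigma'$: the interior of the first $\sigma$-block ($1\le j\le a-1$), the left boundary $j=a$, the interior of the $\alpha$-block ($a+1\le j\le a+l-1$), the right boundary $j=a+l$, and the interior of the second $\sigma$-block ($a+l+1\le j\le n-1$). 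On each interior piece I would identify the descents of $\sigma'$ with descents of $\sigma$ or of $\alpha$, and on each boundary I would decide the descent by a single inequality.

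For the two $\sigma$-block interiors, note that $\hat{\sigma}$ arises from $\sigma$ by the strictly increasing relabeling $v\mapsto v$ for $v<\sigma_{a+1}$ and $v\mapsto v+l-1$ otherwise, so consecutive comparisons are preserved: $j\in\des\sigma'\iff j\in\des\sigma$ for $1\le j\le a-1$, and $j\in\des\sigma'\iff (j-l+1)\in\des\sigma$ for $a+l+1\le j\le n-1$. For the $\alpha$-block interior the entries $\hat{\alpha}_k=\alpha_k+\sigma_{a+1}-1$ are a common translate of $\alpha$, so $a+k\in\des\sigma'\iff k\in\des\alpha$ for $1\le k\le l-1$. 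Substituting the hypothesized forms of $\des\sigma$ and $\des\alpha$, the first $\sigma$-block interior returns $\{j\in\des\pi:1\le j\le a-1\}$; the $\alpha$-block interior returns $\{j\in\des\pi:a<j<b\}\cup[b,a+l-1]$, the interval coming from the term $[b-a,l-1]$ (here one uses $b-a=\min\RL{\alpha}\le l$); and the second $\sigma$-block interior returns all of $[a+l+1,n-1]$, since the corresponding $\sigma$-indices lie in $[a+2,m-1]\subseteq[a+1,m-1]\subseteq\des\sigma$.

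The crux is the two boundaries, handled via~\eqref{eqn:wreath prop}, which guarantees the $\alpha$-block occupies exactly the values $[\sigma_{a+1},\sigma_{a+1}+l-1]$. Since $a,a+1\in\RL{\sigma}$ we have $\sigma_a>\sigma_{a+1}$, hence $\hat{\sigma}_a=\sigma_a+l-1$ strictly exceeds every $\alpha$-block value and $a\in\des\sigma'$. For the right boundary, when the second $\sigma$-block is nonempty (that is, $m\ge a+2$) the index $a+1\in\RL{\sigma}$ is not the last position, so $\sigma_{a+1}>\sigma_{a+2}$, whence $\hat{\sigma}_{a+2}=\sigma_{a+2}<\sigma_{a+1}$ falls strictly below every $\alpha$-block value and $a+l\in\des\sigma'$; when $m=a+1$ the $\alpha$-block is terminal, $a+l=n$, and $[b,a+l-1]=[b,n-1]$ already. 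I expect this boundary analysis, together with tracking exactly when $a+l$ equals $n$, to be the main obstacle, since it is where~\eqref{eqn:wreath prop} and the right-to-left-maxima structure of $\sigma$ must be combined correctly.

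Finally I would assemble the five pieces, the terminal contributions $[b,a+l-1]$, $\{a+l\}$, and $[a+l+1,n-1]$ merging into $[b,n-1]$, to obtain
$$\des\sigma'=\bigl(\des\pi\cap[1,a-1]\bigr)\cup\{a\}\cup\bigl(\des\pi\cap(a,b)\bigr)\cup[b,n-1].$$
To conclude $\des\sigma'=\des\pi$ it remains to observe $\{a\}\cup[b,n-1]\subseteq\des\pi$. This follows from $\phi(\pi)=(a,b)$: by definition $a\in\RL{\pi}$ with $a\neq n$ and $[b,n]\subseteq\RL{\pi}$, so each of $a,b,b+1,\ldots,n-1$ is a right-to-left maximum that is not the final position and is therefore a descent of $\pi$ (the same observation used in Lemma~\ref{lem:des pi}). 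As the remaining two contributions are tautologically contained in $\des\pi$, the displayed set is exactly $\des\pi$, completing the proof.
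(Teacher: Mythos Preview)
Your proposal is correct and follows the same approach as the paper: decompose the positions of $\sigma'$ according to the three blocks of the inflation, identify the descents block-by-block (the paper compresses this into the single line $\des \sigma' = \makeset{i}{i\in \des \pi,\ 1\leq i\leq a}\cup(\des\alpha+a)\cup([a+1,m-1]+(l-1))$), and then simplify using $m+l-1=n$ and $\{a\}\cup[b,n-1]\subset\des\pi$. Your version is simply a fully expanded account of what the paper's ``directly from the definitions'' is doing, including the boundary checks at $j=a$ and $j=a+l$ that the paper leaves implicit.
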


\begin{proof}
It follows directly from the definitions that 
$$\des \sigma' = \makeset{i}{i\in \des \pi, 1\leq i\leq a}\ \cup\ \des \alpha +a\ \cup\  [a+1,m-1]+(l-1).$$  Since $l = n-a-\rho(\pi)$ and $m = a+\rho(\pi) +1$ it follows that 
$$\des \sigma' = \makeset{i}{i\in \des \pi , 1\leq i< b}\ \cup\  [b,n-1] = \des \pi.$$
\end{proof}

\begin{lemma}\label{lem:step wreath}
Assume $\stp \alpha = \emptyset$ and $\stp \sigma \subset\{a\}$. Then $\stp \sigma' = \emptyset$ provided $\alpha_1<l$
\end{lemma}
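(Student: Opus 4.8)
The plan is to show that no index $i\in[1,n-1]$ is a step of $\sigma'$ by classifying $i$ according to where the adjacent positions $i$ and $i+1$ fall in the block structure of $\sigma'=\sigma(\alpha,a+1)$. By construction $\sigma'$ consists of three consecutive blocks: the positions $[1,a]$ carry $\hat\sigma_1\ldots\hat\sigma_a$, the positions $[a+1,a+l]$ carry $\hat\alpha_1\ldots\hat\alpha_l$, and the positions $[a+l+1,n]$ carry $\hat\sigma_{a+2}\ldots\hat\sigma_m$. Consecutive positions $i,i+1$ therefore fall into exactly one of five cases: both inside the first $\hat\sigma$-block, both inside the $\hat\alpha$-block, both inside the third $\hat\sigma$-block, or straddling one of the two boundaries $a\mid a+1$ and $a+l\mid a+l+1$. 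I first record that the hypothesis $\alpha_1<l$ forces $l\ge 2$ (since $\alpha_1=l$ whenever $l=1$), which I use to keep the two boundary transitions distinct and to push straddling values apart.

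For the three interior cases the argument is routine. On either $\hat\sigma$-block the assignment $\sigma_i\mapsto\hat\sigma_i$ is strictly increasing; a decreasing same-side pair (both values below $\sigma_{a+1}$, or both above) is shifted by a common constant, whereas a decreasing straddling pair has its gap widened by $l-1$, hence exceeds $1$. Thus a step of $\sigma'$ interior to a $\hat\sigma$-block forces the corresponding index to be a step of $\sigma$; but the first block involves only $\sigma$-indices $<a$ and the third block only $\sigma$-indices $>a$, so $\stp\sigma\subset\{a\}$ excludes all of them. Likewise, on the $\hat\alpha$-block one has $\hat\alpha_j=\alpha_j+\sigma_{a+1}-1$, a pure shift, so an interior step of $\sigma'$ there forces a step of $\alpha$, which is excluded by $\stp\alpha=\emptyset$.

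The two boundary cases are the crux. At the boundary $a\mid a+1$ (nonvacuous only when $a\ge 1$), since $a\in\RL{\sigma}$ we have $\sigma_a>\sigma_{a+1}$ and hence $\hat\sigma_a=\sigma_a+l-1$, while $\sigma'_{a+1}=\hat\alpha_1=\alpha_1+\sigma_{a+1}-1$. A step here would read $\hat\sigma_a-1=\hat\alpha_1$, i.e. $\sigma_a-\sigma_{a+1}=\alpha_1-l+1$; the left side is $\ge 1$ while the right side is $\le 0$ precisely because $\alpha_1<l$, a contradiction. At the boundary $a+l\mid a+l+1$ (nonvacuous only when $a+2\le m$), since $a+1,a+2\in\RL{\sigma}$ we have $\sigma_{a+1}>\sigma_{a+2}$ and hence $\hat\sigma_{a+2}=\sigma_{a+2}$, while $\sigma'_{a+l}=\hat\alpha_l=\alpha_l+\sigma_{a+1}-1$. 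A step here would read $\hat\alpha_l-1=\hat\sigma_{a+2}$, i.e. $\sigma_{a+1}-\sigma_{a+2}=2-\alpha_l\le 1$; since $\sigma_{a+1}-\sigma_{a+2}\ge 1$, equality would force $\alpha_l=1$ together with $\sigma_{a+1}-1=\sigma_{a+2}$, that is $a+1\in\stp\sigma$, contradicting $\stp\sigma\subset\{a\}$. This exhausts all cases, so $\stp\sigma'=\emptyset$.

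I expect the genuine obstacle to be the boundary $a\mid a+1$: it is the only place where the hypothesis $\alpha_1<l$ enters, and it is exactly the junction at which the inflation could otherwise manufacture a spurious step (the phenomenon behind the non-injectivity example $52431,53421\mapsto 53421$ noted before Lemma~\ref{lem:step wreath}). The remaining work is bookkeeping: verifying the same-side-versus-straddling behaviour of the shift $\sigma_i\mapsto\hat\sigma_i$, and reading off $\hat\sigma_a$ and $\hat\sigma_{a+2}$ correctly from the right-to-left-maxima data via the signs of $\sigma_a-\sigma_{a+1}$ and $\sigma_{a+1}-\sigma_{a+2}$.
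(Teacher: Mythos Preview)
Your proof is correct and follows essentially the same approach as the paper's: reduce to the two boundary positions $a$ and $a+l$, then rule each out using $\alpha_1<l$ and $\stp\sigma\subset\{a\}$ respectively. The paper compresses the interior analysis into the single assertion $\stp\sigma'\subset\{a,a+l\}$ ``from the definition of inflation'', whereas you spell out the same-side/straddling case split; your boundary computations match the paper's one-line justifications almost verbatim.
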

\begin{proof}
It is clear from the definition of inflation that we must have $\stp \sigma' \subset\{a,a+l\}$. Now, the only way we could have $a\in \stp \sigma'$ is for $\alpha_1=l$, which is prohibited, so $a\notin \stp \sigma'$.  Similarly, the only way $a+l$ could be a step in $\sigma'$ is for $a+1\in \stp \sigma$.  As this is prohibited we conclude that $\stp \sigma' = \emptyset$.  
\end{proof}

\begin{lemma}\label{lem:recover from wreath}
Assume that $\stp \sigma\subset\{a\}$.  Then $a+l$ is the largest index such that 
$$\{\sigma'_{a+1}, \ldots, \sigma'_{a+l}\}$$
is an interval.  An immediate consequence of this is that $\alpha$ and $\sigma$ may be recovered from $\sigma'$ and $a$.  
\end{lemma}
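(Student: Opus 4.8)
The plan is to pin down the index $a+l$ purely from the interval structure of $\sigma'$, and then read off $\alpha$ and $\sigma$ from it. Write $S_k = \{\sigma'_{a+1},\ldots,\sigma'_k\}$ for $a+1\le k\le n$. That $k=a+l$ has the stated property is immediate from~\eqref{eqn:wreath prop}, which gives $S_{a+l} = [\sigma_{a+1},\sigma_{a+1}+l-1]$, an interval of $l$ consecutive integers. The real content is to show that $S_k$ fails to be an interval for every $k>a+l$; this is the step I expect to be the main obstacle, since a priori a gap created at one index could conceivably be filled in again further to the right.

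First I would record the structure of the entries lying to the right of the inflated block. By the conventions in force we have $[a+1,m]\subseteq\RL{\sigma}$, so $a+1,a+2,\ldots,m$ are all right-to-left maxima of $\sigma$, and hence $\sigma_{a+1}>\sigma_{a+2}>\cdots>\sigma_m$. In particular every $\sigma_j$ with $j>a+1$ satisfies $\sigma_j<\sigma_{a+1}$, so by the definition of inflation these entries are left unchanged ($\hat{\sigma}_j=\sigma_j$) and occupy positions $a+l+1,\ldots,n$ of $\sigma'$. Consequently the entries of $\sigma'$ appearing after the block are exactly $\sigma_{a+2}>\sigma_{a+3}>\cdots>\sigma_m$, all strictly below $\sigma_{a+1}$, the largest of them being $\sigma_{a+2}$ in position $a+l+1$.

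Now fix any $k>a+l$. Then $S_k$ contains the whole block $[\sigma_{a+1},\sigma_{a+1}+l-1]$ together with at least the value $\sigma_{a+2}<\sigma_{a+1}$. If $S_k$ were an interval it would have to contain $\sigma_{a+1}-1$; but every element of $S_k$ strictly below $\sigma_{a+1}$ lies among $\{\sigma_{a+2},\ldots,\sigma_m\}$ and is therefore at most $\sigma_{a+2}$. Hence we would need $\sigma_{a+2}=\sigma_{a+1}-1$, i.e. $a+1\in\stp\sigma$, contradicting the hypothesis $\stp\sigma\subseteq\{a\}$ (as $a+1\neq a$). Thus no $k>a+l$ yields an interval, and $a+l$ is the largest index with the asserted property.

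Finally, for the recovery statement I would argue that $\sigma'$ and $a$ determine $l$: namely $l=k-a$, where $k$ is the largest index for which $S_k$ is an interval, which the previous paragraph identifies as $a+l$. With $a$ and $l$ in hand the block is precisely positions $a+1,\ldots,a+l$, so $\alpha=\std(\sigma'_{a+1}\ldots\sigma'_{a+l})$; and $\sigma$ is recovered by collapsing this block back to a single entry of value $\min S_{a+l}=\sigma_{a+1}$ and standardizing the resulting word $\sigma'_1\ldots\sigma'_a\,\sigma_{a+1}\,\sigma'_{a+l+1}\ldots\sigma'_n$, which exactly undoes the value shifts prescribed by the inflation. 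These last verifications are routine once $l$ has been identified.
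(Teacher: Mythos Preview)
Your argument is correct and follows essentially the same route as the paper: use $[a+1,m]\subseteq\RL{\sigma}$ to get $\sigma_{a+1}>\sigma_{a+2}>\cdots>\sigma_m$, then use $\stp\sigma\subseteq\{a\}$ to rule out $\sigma_{a+2}=\sigma_{a+1}-1$, so that $\sigma_{a+1}-1$ never appears in $S_k$ for $k>a+l$. Your recovery paragraph is a bit more explicit than the paper's, but the content is the same.
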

\begin{proof}
Let $k$ be the largest index such that $\{\sigma'_{a+1}, \ldots, \sigma'_k\}$ is an interval.  By Equation~\eqref{eqn:wreath prop} we know that 
$$\{\sigma'_{a+1},\ldots,\sigma'_{a+l}\} = [\sigma_{a+1}, \sigma_{a+1} + l -1],$$ 
and hence $a+l\leq k$.   If $a+1=m$ we are done.  Otherwise, as $[a+1,m]\in\RL{\sigma}$ we must have $\sigma_{a+1}>\sigma_{a+2}>\cdots$.  Moreover, as $\stp(\sigma)\subset\{a\}$ it follows that $\sigma_{a+1}-1$ must be in some position to the left of position $a+1$.  It now readily follows that $k\leq a+l$, completing the proof.  

\end{proof}

With the technical lemmas proved we now turn our attention to the proof of the Main Theorem.  

\begin{proof}[Proof of Main Theorem]

It is clear from the definition of $\Theta$ that when $n=1$ the map is well defined and the theorem holds.  As $\Theta$ is defined recursively we will proceed by induction on $n$, and fix $\pi\in S_n$.  To simplify the following arguments it will be useful to recall the fact that  $|S_n(1423)| = |S_n(2413)|$, (see~\cite{Bona:Combinat12}).  Therefore, to show that $\Theta$ is bijective it will suffice to establish the injectivity of $\Theta$.  We proceed by considering, independently, the three defining of $\Theta$.  In each case we will also show that $\Theta$ preserves Steps and that $\phi(\T) = \phi(\pi)$.  Therefore we may conclude that if $\Theta$ is injective in each case then it must also be, globally, injective as well.  To this end, observe that once we show $\RL{\pi} = \RL{\T}$ it immediately follows that 
\begin{equation}\label{eq:same phi}
\phi(\pi) = (a,b) = \phi(\T).
\end{equation}

\bigskip
{\tt Case 1:} $\stp \pi \neq \emptyset$
\bigskip 

We first check that $\Theta$ is well defined in this case.  Recall from the definition of $\Theta$ that $\T =\Tp(21,k)$ where $\pi'\in S_{n-1}$ is the unique permutation such that $\pi = \pi'(21,k)$ and $k=\min\left(\stp \pi\right)$.  By our induction hypothesis we may assume that $\Tp\in S_{n-1}(2413)$.  Since $\stp 2413 = \emptyset$ it follows that  $\T\in S_n(2413)$.  In this case, it readily follows that $\Theta$ preserves right-to-left maxima, descents, steps, and the position of $n$ and $n-1$. Likewise, it follows that $\Theta(\pi) = \pi$ provided $\pi\in S_n(1423)\ \cap\ S_n(2413)$.

The fact that we may recover $\pi$ from $\T$, in this case, easily follows by our induction hypothesis and the fact that $\stp \pi = \stp \T$.

\bigskip
{\tt Case 2:} $\stp \pi = \emptyset$ and $a= 0$
\bigskip

In this case $\RL{\pi} = [b,n]$ and, hence, $\pi_b= n$.  Denoting by $\pi'$ the permutation obtained by deleting $n$ from $\pi$, we see that
$$[b,n-1]\subset \RL{\pi'} = \RL{\Tp},$$
where the equality follows by our induction hypothesis.    Since $\T$ is obtained from $\Tp$ by inserting an $n$ in the $b$th position it is now clear that  $\RL{\T}=[b,n]=\RL{\pi}$.  

The fact that  $\Theta$, in this case, preserves descents, the position of $n$ and $n-1$, and that $\Theta(\pi) = \pi$ provided $\pi\in S_n(1423)\ \cap\ S_n(2413)$ is straightforward.

Now consider the step statistic.  Certainly, $\stp \Tp = \stp \pi' = \emptyset$.  Therefore we only need to show that $n-1$ is not in the $(b+1)$st position of $\Tp$.  As this is the largest element, this could only occur if $\pi'_{b+1} = n-1$.  This is impossible since $\pi_b = n$ and $b\notin \stp \pi$.

To see that $\T$ is 2413-avoiding, observe that any occurrence of 2413 in $\T$ would have positions  $x<b<y<z$ since $\Tp \in S_{n-1}(2413)$.  This is impossible as we have established that $[b,n]=\RL{\T}$, which precludes $\T_y<\T_z$.  

In this case, the fact that we may recover $\pi$ from $\T$, follows immediately from Equation~\eqref{eq:same phi} and our induction hypothesis.

\bigskip
{\tt Case 3:} $\stp \pi = \emptyset$ and $a>0$
\bigskip

First consider the permutations $\pi^{(1)}\in S_m(1423)$ and $\pi^{(2)}\in S_l(1423)$ where $m = a +\rho(\pi)+1$ and $l = n-a - \rho(\pi)$. As $a>0$ we see that $l<n$. By Equation~\eqref{eq:decomp size} we are also guaranteed, since $\pi \neq n(n-1)\ldots 1$ that $m<n$.   Consequently, we may inductively define $\T = \sigma(\alpha,a+1)$ in this case where
\[
\sigma = \Theta(\pi^{(1)})\in S_m(2413) \qquad\mbox{and}\qquad \alpha = \Theta(\pi^{(2)})\in S_l(2413).
\]
By Lemma~\ref{lem:2413 avoid} we further know that $\T \in S_n(2413)$.  

To see that $\Theta$ preserves the required statistics, let $\RL{\pi} = \{i_1<\cdots <i_s< a\}\ \cup\ [b,n]$.  Now Lemma~\ref{lem:RL pi}, our induction hypothesis, and Lemma~\ref{lem:RL wreath} together imply that  $\RL{\T} =  \{i_1,\ldots , i_s,a\}\ \cup\ [b,n] = \RL{\pi}$.  

Likewise, Lemmas~\ref{lem:des pi} and ~\ref{lem:des wreath} together with our induction hypothesis imply that $\des \T=  \des \pi$.  

By Equation~\eqref{eq:interval} it follows that $\pi^{(2)}_1<l$ and since the position of the $l$ is preserved under $\Theta$ we must also have $\alpha_1 < l$. It now follows by Lemmas~\ref{lem:step pi} and~\ref{lem:step wreath} that $\stp \T= \emptyset =  \stp \pi$.

Now consider the position of $n$ and $n-1$.  As $\Theta$ preserves the right-to-left maxima it must preserve the position of $n$.    Now let $i$ be such that $\pi_i = n-1$. If $i\leq a$ then $\pi^{(1)}_i$ is the second largest element in $\pi^{(1)}$ and by induction $\sigma_i$ is the second largest element in $\sigma$. It now follows that $\Tp_i = n-1$ as well.  On the other hand if $a<i$ then we must have $i=b$.  In this case $\pi^{(2)}_{b-a}$ is the largest element in $\pi^{(2)}$ and $\pi^{(1)}_{a+1}$ is the second largest element in $\pi^{(1)}$.  By induction and the definition of inflation it follows that $\T_i=\sigma(\alpha,a+1)_i = n-1$ in this case too. 

Now assume $\pi \in S_n(1423)\ \cap\ S_n(2413)$.  In this case $\pi_i>\pi_b$ for all $i<a$.  For otherwise, $\pi_i \pi_a \pi_k \pi_b$ is either a 1423-pattern or a 2413-pattern, where $\pi_k = \chi(\pi)$.  Moreover, since $\stp \pi  = \emptyset$, we must have $\rho(\pi) = 0$.  Together, these two observations imply that
\[
\pi^{(1)} = \std\left(\pi_1\ldots \pi_a \pi_b \right)\qquad\mbox{and}\qquad\pi^{(2)} =  \pi_{a+1}\ldots \pi_b \pi_{b+1}\ldots \pi_n,
\]
where $\pi^{(1)}_{a+1} = 1$. By induction, since  $\pi^{(j)}\in S_n(1423)\ \cap\ S_n(2413)$, we see that $\sigma = \pi^{(1)}$ and $\alpha = \pi^{(2)}$.  As $\sigma_{a+1} = 1$, it now follows that $\T= \sigma(\alpha, a+1) =\pi$.

We now turn our attention to showing that we may recover $\pi$ from $\T$ in this case.  It follows by Lemmas~\ref{lem:step pi} and~\ref{lem:recover from wreath} and our induction hypothesis that we may recover $\sigma$ and $\alpha$  from $\T$ and hence $\pi^{(1)}$ and $\pi^{(2)}$. We now show that  $\pi$ may be recovered from $\pi^{(1)}$, $\pi^{(2)}$, and $(a,b)$. By Equation~\eqref{eq:interval} it follows that the permutation obtained by deleting the largest value in $\pi^{(2)}$ is 
\[
\pi_{a+1}\ldots \pi_{b-1} \pi_{b+\rho(\pi)+1}\ldots \pi_n.
\]
Additionally, it also follows that $\chi(\pi) = l-1$.  Moreover, observe that adding $l-1$ to each element of $\pi^{(1)}$ yields the sequence $\pi_1\ldots \pi_a \pi_b \pi_{b+1}\ldots \pi_{b+\rho(\pi)}$.  This completes our proof.    
\end{proof}

\begin{ack}
The author is grateful to  Sergi Elizalde for recommending this problem.  Additionally, the author is grateful to Dan Saracino for help clarifying the exposition and to  Alex Burstein for pointing out the connection of this work to~\cite{Stankova:Forbidde94}. 
\end{ack}

\bibliography{/Users/jonathan/GoogleDrive/Papers/mybib}
\bibliographystyle{siam}

\end{document}